\documentclass[a4paper]{article}
\usepackage[T1]{fontenc}
\usepackage[utf8]{inputenc}
\usepackage[babel]{csquotes}
\usepackage{microtype}
\usepackage{geometry} 
\geometry{a4paper,top=3cm,bottom=3cm,left=2.5cm,right=2.5cm,%
	heightrounded,bindingoffset=5mm}
\usepackage{quoting} 
\quotingsetup{font=small}
\usepackage{amsmath}
\usepackage{amsfonts}
\usepackage{amssymb}
\usepackage{amsthm}
\makeatletter
\renewcommand{\pod}[1]{\allowbreak\mathchoice
	{\if@display \mkern 18mu\else \mkern 8mu\fi (#1)}
	{\if@display \mkern 18mu\else \mkern 8mu\fi (#1)}
	{\mkern4mu(#1)}
	{\mkern4mu(#1)}
}
\newcommand{\lr}[1]{\lfloor#1\rfloor}
\usepackage{mathrsfs}
\usepackage{mathtools}
\usepackage{braket}
\usepackage{bm}
\usepackage{enumerate}
\usepackage{empheq}
\usepackage{emptypage}
\usepackage{graphics}
\usepackage{tikz}
\usetikzlibrary{matrix,arrows,decorations.pathmorphing}
\usepackage{tikz-cd}
\usepackage{xparse}
\usepackage{stmaryrd}
\usepackage{latexsym,xspace,centernot}
\usepackage{pgf}
\usepackage{authblk}

\newcommand{\C}{\mathbb{C}}
\newcommand{\Q}{\mathbb{Q}}

\newcommand{\m}[4]{\begin{pmatrix}
		#1&#2\\#3&#4
	\end{pmatrix}}
	
	\renewcommand{\epsilon}{\varepsilon}
	\newcommand{\g}[1]{\mathfrak{#1}}
	\theoremstyle{plain}
	
	\newtheorem{theorem}{Theorem}
	\newtheorem{lemma}{Lemma}

	\theoremstyle{remark}
	\newtheorem{remark}{Remark}
	\theoremstyle{conjecture}
	
	\theoremstyle{definition}

	\renewcommand{\pod}[1]{\allowbreak\mathchoice
		{\if@display \mkern 18mu\else \mkern 8mu\fi (#1)}
		{\if@display \mkern 18mu\else \mkern 8mu\fi (#1)}
		{\mkern4mu(#1)}
		{\mkern4mu(#1)}
	}
	\DeclareMathOperator{\norm}{Norm}
	\begin{document}
		\title{\textbf{A note on the zeros of generalized Hurwitz zeta functions}}\date{}
		\author{Giamila Zaghloul}
		\affil{Dipartimento di Matematica\\Università degli Studi di Genova\\via Dodecaneso 35, 16146 Genova}
		\maketitle
		\begin{abstract}
			Given a function $f(n)$ periodic of period $q\geq 1$ and an irrational number $0<\alpha\leq 1$, Chatterjee and Gun (cf. \cite{c-g}) proved that the series $F(s,f,\alpha)=\sum_{n=0}^{\infty}\frac{f(n)}{(n+\alpha)^s}$ has infinitely many zeros for $\sigma>1$ when $\alpha$ is transcendental and $F(s,f,\alpha)$ has a pole at $s=1$, or when $\alpha$ is algebraic irrational and $c=\frac{\max{f(n)}}{\min{f(n)}}<1.15$. In this note, we prove that the result holds in full generality. 
		\end{abstract}
		\section{Introduction} 
		Let $0<\alpha\leq 1$ be a real number, the \emph{Hurwitz zeta function} is defined as
		\[
		\zeta(s,\alpha)=\sum_{n=0}^{\infty}\frac{1}{(n+\alpha)^{s}},
		\]
		for $s=\sigma+it\in\C$ with $\sigma>1$. It is known that it admits a meromorphic continuation to $\C$ with a simple pole at $s=1$. In their paper \cite{d-h},
		Davenport and Heilbronn proved that if $\alpha\notin\{ 1,\frac{1}{2}\}$ is either rational or transcendental, then $\zeta(s,\alpha)$ has infinitely many zeros for $\sigma>1$. The same result when $\alpha$ is algebraic irrational was proved by Cassels in \cite{cas}.
		
		Let now $f(n)$ be a periodic function of period $q\geq 1$. For $\sigma>1$, we define the generalized Hurwitz zeta function as 
		\[
		F(s,f,\alpha)=\sum_{n=0}^{\infty}\frac{f(n)}{(n+\alpha)^{s}}.
		\]
		As for $\zeta(s,\alpha)$, $F(s,f,\alpha)$ is absolutely convergent for $\sigma>1$ and it admits a meromorphic continuation to the whole complex plane (see e.g. \cite{c-g}).
		
		In \cite{c-g}, Chatterjee and Gun assume that $f(n)$ is positive valued and prove that $F(s,f,\alpha)$ has infinitely many zeros in the half-plane $\sigma>1$ if $\alpha$ is transcendental and $F(s,f,\alpha)$ has a pole at $s=1$, or if $\alpha$ is algebraic irrational and 
		\begin{equation}\label{max}
		c:=\frac{\underset{n}{\max}f(n)}{\underset{n}{\min}f(n)}<1.15.
		\end{equation}
		In this note we show that these assumptions can be removed, proving the result in full generality, also including the case of $\alpha$ rational, which can be easily deduced from \cite{s-w}.

		\begin{theorem}\label{theor}
			Let $f(n)$ be a non-identically zero periodic function with period $q\geq 1$ and let $0<\alpha\leq 1$ be a real number. If $\alpha\notin\{1,\frac{1}{2}\}$, or if $\alpha\in\{1,\frac{1}{2}\}$ and $F(s,f,\alpha)$ is not of the form $P(s)L(s,\chi)$, where $P(s)$ is a Dirichlet polynomial and $L(s,\chi)$ is the $L$-function associated to a Dirichlet character $\chi$, then $F(s,f,\alpha)$ has infinitely many zeros with $\sigma>1$. 
		\end{theorem}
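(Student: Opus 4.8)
I would treat the rational and irrational cases separately, the latter being the substantive one. If $\alpha=a/q'$ in lowest terms, then grouping the integers $n\ge0$ by the residue of $q'n+a$ rewrites $F(s,f,\alpha)=(q')^{s}L(s,g)$, where $g$ is a periodic function supported on the single residue class $a\bmod q'$ and $L(s,g)=\sum_{m\ge1}g(m)m^{-s}$. By the theorem of Saias--Weingartner \cite{s-w}, $L(s,g)$ --- and hence $F(s,f,\alpha)$, as $(q')^{s}$ contributes only a generalized Dirichlet polynomial factor --- has infinitely many zeros in $\sigma>1$ unless it has the form $P(s)L(s,\chi)$; an elementary comparison of $\ell$-th Dirichlet coefficients as $\ell\to\infty$ through primes shows that, once $q'\ge3$, such a factorization would force $g\equiv0$. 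Hence the exceptional form can occur only for $q'\le2$, i.e. $\alpha\in\{1,\tfrac12\}$, which is exactly the exception in the statement.

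\textbf{The irrational case.} For $\alpha$ irrational I would argue directly from the absolutely convergent series $F(s,f,\alpha)=\sum_{n\ge0}f(n)(n+\alpha)^{-s}$ on $\sigma>1$, via a Bohr--Kronecker value-distribution argument; the goal is to show that $0$ lies in the interior of the closure of $\{F(\sigma+it):t\in\R\}$ for $\sigma$ slightly above $1$. The crucial input is a linear-independence statement: there is a set $S\subseteq\{n\ge0:f(n)\ne0\}$, of full logarithmic density therein, such that $\{\log(n+\alpha):n\in S\}$ is linearly independent over $\Q$. When $\alpha$ is transcendental one may take $S=\{n:f(n)\ne0\}$ outright, since a relation $\sum a_n\log(n+\alpha)=0$ with $a_n\in\Z$ gives the identity $\prod(n+\alpha)^{a_n}=1$, impossible because the polynomials $\prod_{a_n>0}(X+n)^{a_n}$ and $\prod_{a_n<0}(X+n)^{-a_n}$ have disjoint root sets. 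When $\alpha$ is algebraic irrational this outright independence can fail (for instance $\log\alpha=-\log(1+\alpha)$ when $\alpha=\tfrac{\sqrt5-1}{2}$), and one produces $S$ by a Cassels-type argument \cite{cas}: in $K=\Q(\alpha)$, for all but a very sparse set of $n$ there is a rational prime $p$ modulo which $-n$ is a simple root of the minimal polynomial of $\alpha$; choosing a degree-one prime $\mathfrak p\mid p$ of $K$ with $v_{\mathfrak p}(n+\alpha)=1$ and thinning $S$ so that these distinguished primes separate the chosen $n+\alpha$, the $\mathfrak p$-adic valuations force $\Q$-linear independence of the $\log(n+\alpha)$, $n\in S$, while only $n$ for which $\norm_{K/\Q}(n+\alpha)$ is composed of very small primes are discarded --- a set so sparse that $\sum1/n$ over it converges.

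\textbf{From the lemma to infinitely many zeros.} Fix such an $S$ and split $F=G+E$ with $G(s)=\sum_{n\in S}f(n)(n+\alpha)^{-s}$; since $S$ has full logarithmic density in $\{f\ne0\}$, $\sum_{n\notin S}|f(n)|(n+\alpha)^{-\sigma}=o\big(\sum_{n\in S}|f(n)|(n+\alpha)^{-\sigma}\big)$ as $\sigma\to1^{+}$, and both tend to $\infty$. Fix $\sigma_0\in(1,2)$ close enough to $1$ that $\sum_{n\in S}|f(n)|(n+\alpha)^{-\sigma_0}$ dwarfs $\sum_{n\notin S}|f(n)|(n+\alpha)^{-\sigma_0}$, and pick a finite $S_0\subseteq S$ carrying almost all of the former with no dominant term. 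By the $\Q$-linear independence of $\{\log(n+\alpha):n\in S_0\}$ and Kronecker's theorem, the Dirichlet polynomial $P_0(s):=\sum_{n\in S_0}f(n)(n+\alpha)^{-s}$ can, over suitable arbitrarily high intervals of $t$, be made to trace a loop about $0$ of winding number $\ne0$ lying on a circle $|w|=r$, for some fixed $r$ with $\sum_{n\notin S_0}|f(n)|(n+\alpha)^{-\sigma_0}<r<\sum_{n\in S_0}|f(n)|(n+\alpha)^{-\sigma_0}$ (such $r$ exists since $S_0$ carries most of the mass). Since $|F(\sigma_0+it)-P_0(\sigma_0+it)|\le\sum_{n\notin S_0}|f(n)|(n+\alpha)^{-\sigma_0}<r$, the function $F(\sigma_0+it)$ inherits the same nonzero winding over each such interval, and a standard application of the argument principle (as in \cite{d-h,cas}) yields $\gg T$ zeros of $F$ with $\Re s>1$ and $0<\Im s<T$, in particular infinitely many.

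\textbf{Main obstacle.} The reduction to \cite{s-w}, the transcendence argument, and the final argument-principle step are standard for this circle of problems. The point that does the real work --- and that dispenses with the hypotheses $c<1.15$ and ``$F$ has a pole at $s=1$'' of \cite{c-g} --- is the Cassels-type construction, for $\alpha$ algebraic irrational, of a $\Q$-linearly independent family of frequencies inside the support of $f$ that still carries essentially all of the mass; balancing the choice and separation of the auxiliary primes against the sparse set of $n$ for which $\norm_{K/\Q}(n+\alpha)$ is too smooth is where the care lies.
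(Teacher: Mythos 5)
Your handling of the rational case (reduction to Saias--Weingartner plus the observation that a factorization $P(s)L(s,\chi)$ forces the supporting residue class to have modulus at most $2$) and of the transcendental case (full $\Q$-linear independence of the $\log(n+\alpha)$, then Kronecker and Rouch\'e) is essentially what the paper does. The gap is in the algebraic irrational case, which is the heart of the theorem. You propose to extract a set $S$ inside the support of $f$, carrying essentially all of the mass of $\sum|f(n)|(n+\alpha)^{-\sigma}$ as $\sigma\to1^+$, on which the frequencies $\log(n+\alpha)$ are $\Q$-linearly independent, and to treat the complement as an error term dominated by the value-disc of the main Dirichlet polynomial. No construction of such an $S$ is available. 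Cassels' lemma (and the refinement proved here) yields only, in each short block $N<n\le N+M$ with $M=\lfloor 10^{-6}N\rfloor$, a proportion of about $54\%$ of the $n$ in each residue class mod $q$ possessing a prime ideal $\g{p}_n\mid(n+\alpha)\g{a}$ dividing no other $(m+\alpha)\g{a}$ with $m\le N+M$; the remaining $46\%$ cannot be discarded, and $46\%$ of every block is nowhere near ``$o$ of the $S$-mass''. Your claim that the discarded $n$ --- those for which $\norm_{K/\Q}(n+\alpha)$ is very smooth --- form a set with $\sum 1/n<\infty$ is not supported by Cassels or by anything known: for a norm form of degree $\ge 2$ it is not known that almost all values have a prime factor exceeding $n$, let alone exceeding $n\log^{1+\epsilon}n$, which is what your ``thinning so that the distinguished primes separate the chosen $n+\alpha$'' would require (each kept $n$ forces $S$ to avoid the entire progression $n\bmod p_n$, since $\g{p}_n\mid(m+\alpha)\g{a}$ whenever $m\equiv n\pmod{p_n}$, and the union of these progressions has positive density unless $\sum 1/p_n$ converges). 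There is a genuine tension: a summably sparse exceptional set could only be obtained by demanding a tiny prime factor, which then cannot separate; a separating prime must exceed $n$, and the $n$ lacking one are only known to be a bounded proportion of each block.

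The paper resolves exactly this tension by not asking for global linear independence at all. It keeps every frequency and builds, block by block, a completely multiplicative unimodular function $\varphi$ on the ideals of $O_K$, so that the twist automatically respects every multiplicative relation among the $n+\alpha$ and the twisted series is Bohr-equivalent to $F$. Within each block and each residue class $b\bmod q$, the $54\%$ of ``free'' terms outweigh the $46\%$ of constrained ones because $(n+\alpha)^{-\sigma}$ is essentially constant across a block of length $10^{-6}N$, and Bohr's theorem on addition of convex curves (applicable precisely because $f(n)=f(b)$ is constant on the class --- this per-class decomposition is the paper's refinement of Cassels and is what removes the hypothesis $c<1.15$) lets one cancel the accumulated partial sum at each stage; iterating drives the twisted series to $0$, and Bohr's equivalence theorem then produces infinitely many zeros of $F$. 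Separately, your final step would need repair even granted $S$: the trajectory of a Dirichlet polynomial does not lie on a circle, and one should instead apply Rouch\'e on a small disc about a zero of the phase-adjusted polynomial; but that part is standard, whereas the construction of $S$ is the real gap.
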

		Observe that if $\alpha=1$, $F(s,f,1)$ reduces to a Dirichlet series with periodic coefficients. By the result of Saias and Weingartner \cite[Corollary]{s-w}, we know that it does not vanish in the half-plane $\sigma>1$ if and only if it is the product of a Dirichlet polynomial and a Dirichlet $L$-function. \begin{remark}
			Examples of functions $f(n)$ giving rise to non-vanishing series in the right half-plane are $f(n)=\chi(n+1)$, where $\chi$ is a Dirichlet character$\mod q$, or $f(n)=(-1)^n$.
		\end{remark}
		If $0<\alpha<1$ is rational, $F(s,f,\alpha)$ can be written as a linear combination of Dirichlet $L$-function, 
		\begin{equation}\label{dec}
		F(s,f,\alpha)=\sum_{\chi\in\mathcal{C}}P_{\chi}(s)L(s,\chi),
		\end{equation} 
		where $\mathcal{C}$ is a set of primitive characters and $P_{\chi}(s)$ is a Dirichlet polynomial. Again by \cite{s-w}, expression \eqref{dec} does not vanish in the half-plane $\sigma>1$ if and only if the sum reduces to a single term. Let now $\alpha=\frac{a}{b}\in\Q$, with $(a,b)=1$, $1\leq a< b$. Then, 
		\begin{equation}\label{rat}		F(s,f,a/b)=b^s\sum_{n=0}^{\infty}\frac{f(n)}{(bn+a)^s}=b^s\sum_{m\equiv a\pmod b}\frac{g(m)}{m^s},
		\end{equation}
		where $g(m)$ is periodic of period $bq$. We prove the following lemma. 
		\begin{lemma}\label{lemma_rat}
			Let $\alpha=\frac{a}{b}$, with $(a,b)=1$, $1\leq a<b$. If $\frac{a}{b}\ne \frac{1}{2}$, then $F(s,f,\frac{a}{b})$ is not of the form $P(s)L(s,\chi)$, where $P$ is a Dirichlet polynomial and $L(s,\chi)$ is the Dirichlet $L$-function associated to the character $\chi$.  
		\end{lemma}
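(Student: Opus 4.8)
The plan is to argue by contradiction: assuming $F(s,f,\tfrac ab)=P(s)L(s,\chi)$, I pass to the ordinary Dirichlet series with periodic coefficients attached to $F$ and derive a contradiction by evaluating its coefficients at integers of the form $k_0p$ with $p$ a large prime in a carefully chosen residue class. By \eqref{rat} the function $G(s):=b^{-s}F(s,f,\tfrac ab)=\sum_{m\ge 1}g(m)m^{-s}$ is an \emph{ordinary} Dirichlet series whose coefficient sequence $g$ is periodic of period $bq$, satisfies $g(m)=0$ for every $m\not\equiv a\pmod b$, and is not identically zero (since $f$ is not). From $F(s,f,\tfrac ab)=P(s)L(s,\chi)$ we get $G(s)=Q(s)L(s,\chi)$ with $Q(s)=b^{-s}P(s)$. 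Expanding the product and matching coefficients with $G$, and using that the $n=1$ coefficient of $L(s,\chi)$ equals $\chi(1)=1$ (so that the smallest frequency of $Q$ not lying in $\mathbb{Z}_{\ge1}$, if there were one, would survive into $G$ attached to a non‑integer, which is impossible), one sees that $Q$ is an ordinary Dirichlet polynomial, say $Q(s)=\sum_{k=1}^{K}q_kk^{-s}$ with $Q\not\equiv0$, and
\[
g(m)=\sum_{\substack{d\mid m\\ d\le K}}q_d\,\chi(m/d),\qquad m\ge 1.
\]
Finally note $b\ge3$: the only $\alpha=a/b\in(0,1)$ with $(a,b)=1$ and $b=2$ is $\tfrac12$, which is excluded.

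Now for the contradiction. Let $k_0=\min\{k\ge1:q_k\neq0\}$. Evaluating the displayed formula at $m=k_0$ leaves only the term $d=k_0$, so $g(k_0)=q_{k_0}\neq0$; hence $k_0\equiv a\pmod b$, and therefore $\gcd(k_0,b)=\gcd(a,b)=1$. Since $b\ge3$ we have $\varphi(b)\ge2$, and because multiplication by $k_0$ permutes the units modulo $b$, there is a unit $u$ with $k_0u\not\equiv a\pmod b$. By Dirichlet's theorem choose a prime $p\equiv u\pmod b$ with $p>K$ and $p$ not dividing the modulus of $\chi$, so that $\chi(p)\neq0$. As $p>K\ge k_0$, the divisors of $k_0p$ not exceeding $K$ are exactly the divisors of $k_0$, whence
\[
g(k_0p)=\chi(p)\sum_{d\mid k_0}q_d\,\chi(k_0/d)=q_{k_0}\,\chi(p).
\]
On the other hand $k_0p\equiv k_0u\not\equiv a\pmod b$, so $g(k_0p)=0$; since $\chi(p)\neq0$ this forces $q_{k_0}=0$, contradicting the choice of $k_0$.

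The only points that need genuine care are the reduction step — extracting from $F=PL$ that, after dividing off $b^{s}$, the polynomial factor is an \emph{ordinary} Dirichlet polynomial, which is exactly what legitimizes the coefficient identity above and is where $\chi(1)=1$ is used — and the inequality $b\ge3$, which is precisely why $\alpha=\tfrac12$ must be set aside: for $b=2$ there is no unit $u\not\equiv a\pmod2$, and indeed $\zeta(s,\tfrac12)=2^{s}L(s,\chi_0)$ with $\chi_0$ the principal character mod $2$ is a bona fide exception. The remaining divisor bookkeeping for $k_0p$ is routine.
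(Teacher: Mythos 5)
Your proof is correct, but it reaches the contradiction by a different device than the paper. Both arguments share the same overall strategy: write the putative product as an ordinary Dirichlet series $\sum_m b(m)m^{-s}$ with $b(m)=\sum_{d\mid m,\,d\in\mathcal N}a(d)\chi(m/d)$ and exhibit an integer $m$ with $b(m)\neq 0$ but $m\not\equiv a\pmod b$. The paper does this in one stroke: since $b(m)$ is periodic of period $T=k\prod_{n\in\mathcal N}n$, and $b(T-n_1)=\chi(-1)a(n_1)\neq 0$ for $n_1=\min\mathcal N$, both $n_1$ and $-n_1$ must lie in the support class, forcing $2a\equiv 0\pmod b$ and hence $b\leq 2$. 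You instead take $m=k_0p$ with $p$ a large prime supplied by Dirichlet's theorem in a residue class $u$ chosen so that $k_0u\not\equiv a\pmod b$; this needs $\varphi(b)\geq 2$, i.e.\ $b\geq 3$, which is exactly the hypothesis, so it works, at the cost of invoking primes in arithmetic progressions where the paper only needs $\chi(-1)\neq 0$. On the other hand, your preliminary reduction --- the minimal-frequency argument showing that the factor $Q=b^{-s}P$ must itself be an \emph{ordinary} Dirichlet polynomial before any coefficient identity can be written down --- addresses a point the paper passes over: the paper starts from $P(s)=\sum_{n\in\mathcal N}a(n)n^{-s}$ with $\mathcal N\subset\mathbb{Z}_{\geq 1}$ and never discusses the $b^{s}$ factor in \eqref{rat}, even though its own example $F(s,f,\tfrac12)=c\,2^{s}L(s,\chi)$ shows that non-integer frequencies must be allowed in $P$. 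So your route is slightly longer but, if anything, more complete.
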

		\begin{proof}
			Consider a Dirichlet polynomial $P(s)=\sum_{n\in\mathcal{N}}\frac{a(n)}{n^s}$,
			where $\mathcal{N}$ is a non-empty finite set of positive integers, and let $\chi$ be a Dirichlet character$\mod k$. Then, 
			\[
			P(s)L(s,\chi)=\sum_{m}\frac{b(m)}{m^s},\quad\text{where}\quad b(m)=\sum_{\substack{n\in\mathcal{N}\\n\mid m}}a(n)\chi\bigg(\frac{m}{n}\bigg),
			\]
			and the coefficients $b(m)$ are periodic of period $k\prod_{n\in\mathcal{N}}n$. Assume that there exist two coprime integers $h<r$, such that $b(m)\ne 0$ only if $m\equiv h\pmod r$. Let $n_1:=\min\mathcal{N}$, then $b(n_1)=a(n_1)\ne 0$ and so $n_1\equiv h\pmod r$. On the other hand, $b(-n_1)=\chi(-1)a(n_1)\ne 0$, then $-n_1\equiv h\pmod r$. It follows that $2h\equiv 0\pmod r$, which implies $r=2$. Thus, we conclude that expression \eqref{rat} can be of the form $P(s)L(s,\chi)$ only if $\alpha=\frac{1}{2}$. 
		\end{proof}
		Observe that if $\alpha=\frac{1}{2}$, the sum \eqref{dec} reduces to a single term for instance if $g(m)=c\chi(m)$, where $\chi$ is a Dirichlet character$\mod 2q$ and $c$ is a non-zero constant (i.e. $f(n)=c\chi(2n+1)$). In this case, $F(s,f,\frac{1}{2})=c2^sL(s,\chi)\ne 0$ in $\sigma>1$.\\
		
		If $\alpha$ is transcendental, the argument of Davenport and Heilbronn (cf. \cite{d-h}) for the Hurwitz zeta function applies also to $F(s,f,\alpha)$. Indeed, we have
		\begin{equation}\label{limit}
		\sum_{n=0}^{\infty}\frac{|f(n)|}{(n+\alpha)^{\sigma}}\to +\infty\quad\text{as}\quad \sigma\to 1^+.
		\end{equation}
		Then, the assumption on the existence of the pole can be avoided and one can proceed as in \cite{d-h} or \cite{c-g}.    
		Thus, we focus on the case of $\alpha$ algebraic irrational. The proof of the theorem in this case is based on a modification of Cassels' original lemma (see \cite{cas}). A suitable decomposition over the residue classes allows us to remove the assumption \eqref{max}.\\
		\section{Proof of the theorem}
		As observed, we can assume that $\alpha$ is algebraic irrational. Let $K=\Q(\alpha)$ and let $O_{K}$ be its ring of integers. Denote by $\g{a}$ the denominator ideal of $\alpha$, i.e. $\g{a}=\Set{r\in O_{K}\mid r\cdot(\alpha)\subseteq O_{K}}$, where $(\alpha)$ is the principal fractional ideal generated by $\alpha$. Then for any integer $n\geq 0$, $(n+\alpha)\g{a}$ is an integral ideal. The following result holds.  
		\begin{lemma}\label{lemma}
			Let $0<\alpha<1$ be an algebraic irrational number and let $K=\Q(\alpha)$. Given a positive integer $q$, fix $b\in\Set{0,\dots,q-1}$.
			There exists an integer $N_0>10^6q$, depending on $\alpha$ and $q$, satisfying the following property:\\
			for any integer $N>N_0$ put $M=\lfloor 10^{-6}N \rfloor$, then at least $0.54\frac{M}{q}$ of the integers $n\equiv b\pmod q$, $N<n\leq N+M$ are such that $(n+\alpha)\g{a}$ is divisible by a prime ideal $\g{p}_n$ for which 
			\[
			\g{p}_n\nmid \prod_{\substack{m\leq N+M\\m\ne n}}(m+\alpha)\g{a}.
			\]
		\end{lemma}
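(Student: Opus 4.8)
The plan is to run an ideal–theoretic counting argument in the spirit of Cassels \cite{cas}, the only genuinely new ingredient being that the count is carried out inside the single arithmetic progression $n\equiv b\pmod q$. Write $K=\Q(\alpha)$ and $d=[K:\Q]$; since $\alpha$ is algebraic irrational, $d\geq 2$. Put $M=\lr{10^{-6}N}$ and $S=\{n\in\Z:\ N<n\leq N+M,\ n\equiv b\pmod q\}$, so $\lvert S\rvert=M/q+O_q(1)$. For each integer $n\geq 0$ the ideal $(n+\alpha)\g{a}$ is integral, and
\[
\log N\bigl((n+\alpha)\g{a}\bigr)=\log N(\g{a})+\sum_{j}\log\lvert n+\alpha^{(j)}\rvert=d\log n+O_\alpha(1)=d\log N+O_\alpha(1)
\]
uniformly for $n\in S$, because $M=O(10^{-6}N)$. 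Set $G(n):=N\bigl((n+\alpha)\g{a}\bigr)\in\Z_{>0}$; for $n\geq 0$ this equals the fixed positive integer $N(\g{a})$ times $N_{K/\Q}(n+\alpha)$, and $N_{K/\Q}(X+\alpha)\in\Q[X]$ is, up to sign, the minimal polynomial of $\alpha$ evaluated at $-X$. Thus $G$ is a nonzero constant times an irreducible polynomial of degree $d$, takes integer values, and $G(n)\asymp_\alpha N^d$ on $S$.

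Next I would isolate the primitivity criterion. If a prime ideal $\g{p}$ divides both $(n+\alpha)\g{a}$ and $(m+\alpha)\g{a}$ with $m\neq n$, then $\g{p}$ contains $(n+\alpha)r-(m+\alpha)r=(n-m)r$ for every $r\in\g{a}$, so $\g{p}\mid(n-m)\g{a}$; hence either $\g{p}\mid\g{a}$ or the rational prime $p$ below $\g{p}$ divides $n-m$. For $n\in S$ and $0\leq m\leq N+M$ we have $1\leq\lvert n-m\rvert\leq N+M$, so the second alternative gives $p\leq N+M$, while for $N$ beyond a bound depending on $\alpha$ the first gives $p\leq N(\g{a})\leq N+M$ as well. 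Conversely, no admissible $\g{p}_n$ can divide $\g{a}$ (such a $\g{p}$ would divide neither $(n+\alpha)\g{a}$ nor any $(m+\alpha)\g{a}$). It follows that $n$ is \emph{good} — admits $\g{p}_n\mid(n+\alpha)\g{a}$ with $\g{p}_n\nmid\prod_{m\leq N+M,\,m\neq n}(m+\alpha)\g{a}$ — precisely when $G(n)$ has a prime factor $>N+M$. So the Lemma is equivalent to the estimate
\[
\#\{n\in S:\ G(n)\ \text{is }(N+M)\text{-smooth}\}\ \leq\ 0.46\,\lvert S\rvert\qquad(N>N_0),
\]
which, together with $\lvert S\rvert=M/q+O_q(1)$ and a suitable enlargement of $N_0$ in terms of $q$, yields at least $0.54\,M/q$ good $n$.

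For $d\geq 3$ the required bound comes out of the standard ideal–product count. With $\Pi=\prod_{n\in S}(n+\alpha)\g{a}$ one has $\log N(\Pi)=\sum_{n\in S}\log G(n)=(d-o(1))\,\lvert S\rvert\log N$. Splitting the prime–ideal factors of $\Pi$ according as the rational prime below lies in $[1,N+M]$ or above $N+M$, the ``small'' part is $\sum_{p\leq N+M}\log p\sum_{n\in S}v_p\bigl(G(n)\bigr)$, and since the minimal polynomial of $\alpha$ has on average one root modulo $p$ — i.e.\ $\sum_{p\leq t}\#\{\text{roots of }m_\alpha\bmod p\}\log p\sim t$, a form of the prime ideal theorem for $K$ — partial summation gives $\sum_{p\leq N+M}\log p\sum_{n\in S}v_p(G(n))\leq(1+o(1))\,\lvert S\rvert\log N$, the finitely many primes dividing $qN(\g{a})$ and the discriminant, together with the interval–counting errors, contributing only $O_{\alpha,q}(\lvert S\rvert)=o(\lvert S\rvert\log N)$. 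Hence the ``large'' part $\sum_{n\in S}\log L(n)$, with $L(n)$ the product of the prime–ideal powers of $(n+\alpha)\g{a}$ over primes $>N+M$, is $\geq(d-1-o(1))\lvert S\rvert\log N$; since $n$ is good exactly when $L(n)>1$, where $\log N<\log L(n)\leq d\log N+O(1)$, we get $\#\{n\in S:L(n)>1\}\geq\bigl(\tfrac{d-1}{d}-o(1)\bigr)\lvert S\rvert$, which exceeds $0.54\,M/q$ once $d\geq 3$ and $N$ is large.

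The remaining case $d=2$ is where the real work lies: the crude argument only delivers $\tfrac12\lvert S\rvert(1-o(1))$, the loss coming from the $n$ for which $L(n)$ is nearly as large as $N^2$, equivalently for which the $(N+M)$-smooth part of $G(n)$ is tiny, and in the extreme for which $G(n)$ itself is $(N+M)$-smooth. One must therefore bound the number of $(N+M)$-smooth values of the irreducible quadratic $G$ on $S$; this is a smoothness question at the critical level $u=\deg G=2$, where the Dickman density is $\rho(2)=1-\log 2\approx 0.307$, so only a moderate positive proportion of values. A standard upper–bound sieve — Rankin's trick, or Brun's or Selberg's sieve applied to the residues of $G$ modulo smooth moduli coprime to $qN(\g{a})$, again using that $\#\{\text{roots of }m_\alpha\bmod p\}$ averages $1$ so that the leading constant is free of $\alpha$ — should yield a bound of the form $\theta_d\,\lvert S\rvert$ with $\theta_d<0.46$ for every $d\geq 2$; combining with the previous paragraph completes the proof. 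I expect the main obstacle to be precisely this last estimate: securing a sufficiently small, and $\alpha$-uniform, constant for the count of $(N+M)$-smooth values of $G$ — the uniformity in $\alpha$ resting on the prime ideal theorem for $K$, and the condition $n\equiv b\pmod q$ entering only through a finite set of local constraints at the primes dividing $q$.
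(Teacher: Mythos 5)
Your reduction, and your treatment of the case $d=[K:\Q]\ge 3$, are essentially sound: there the first-moment computation (total mass $(d+o(1))\lvert S\rvert\log N$ against at most $(1+o(1))\lvert S\rvert\log N$ coming from prime ideals over rational primes $p\le N+M$, via the prime ideal theorem and partial summation) already yields a proportion $1-1/d-o(1)\ge 2/3$ of good $n$, which is more than the lemma asks for and simpler than what the paper does. One quibble: your ``precisely when'' holds only in the direction you actually use — a good $n$ need not have a prime factor of $G(n)$ exceeding $N+M$ (the witnessing ideal could lie over a smaller prime of residue degree $\ge 2$, and even for degree-one primes $p>n$ rather than $p>N+M$ suffices) — but for a lower bound only ``large prime factor $\Rightarrow$ good'' matters, and that implication is fine.

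The genuine gap is the case $d=2$, which is the heart of the matter (and of Cassels' paper, since real quadratic $\alpha$ is the generic hard case): your first moment gives only $(\tfrac12-o(1))\lvert S\rvert$, and the deficit must be recovered by showing that at most $0.46\lvert S\rvert$ of the $n\in S$ have $G(n)\asymp N^2$ being $(N+M)$-smooth. No ``standard upper-bound sieve'' delivers this: Brun- and Selberg-type sieves bound integers free of \emph{small} prime factors, not integers free of \emph{large} ones, and Rankin's method gives nontrivial upper bounds for $y$-smooth values of a polynomial only when $y$ is a much smaller power of the values; at the critical level $u=2$ the Dickman prediction $\rho(2)=1-\log 2$ is a heuristic, not a theorem, for values of an irreducible quadratic — this sits at the same depth as the largest-prime-factor problem for $n^2+1$. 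The paper (following Cassels) circumvents exactly this: it never bounds smooth values of $G$, but instead bounds the set $\g{S}$ of $n$ for which every prime-ideal power $p^{u(\g{p})}$, $\g{p}\in\g{P}$, is $<M$, by a first-and-second-moment (variance) argument: $\sum_{n\in\g{S}}\sigma(n)\ge(2+o(1))S\log M$ played against $\sum\sigma_2\le(\tfrac12+o(1))\tfrac{M}{q}\log M$ and $\sum(\sigma_3)^2\le(\tfrac38+o(1))\tfrac{M}{q}\log^2M$, followed by Cauchy--Schwarz, which yields a quadratic inequality for $\rho=qS/M$ whose relevant root is below $0.46$. That variance step is the idea missing from your proposal. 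Your instinct about the congruence condition is correct, though: the paper implements it precisely by removing the primes $p\mid q$ from $\g{P}$ and invoking the Chinese remainder theorem, so that $\g{p}^v\mid(n+\alpha)\g{a}$ together with $n\equiv b\pmod q$ confines $n$ to a progression modulo $p^vq$ and every moment estimate scales by $1/q$, leaving Cassels' numerics intact with $M$ replaced by $M/q$.
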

		In the following sections, we first show how to complete the proof of Theorem \ref{theor} assuming the above lemma and then we give a proof of the lemma itself. 
		\subsection{Proof of the main result}
		We rearrange Cassels' argument with some suitable small modifications. As in \cite{cas}, or directly by Bohr's theory (see \cite[Theorem 8.16]{a}), it suffices to show that for any $0<\delta<1$ there exist a $\sigma$, with $1<\sigma<1+\delta$, and a completely multiplicative function $\varphi(n):=\varphi((n+\alpha)\g{a})$ of absolute value 1, such that 
		\[
		\sum_{n=0}^{\infty}\frac{f(n)\varphi(n)}{(n+\alpha)^\sigma}=0.
		\]
		Notice that it is enough to define $\varphi(\g{p})$, with $|\varphi(\g{p})|=1$, on the prime ideals $\g{p}$ dividing $(n+\alpha)\g{a}$.
		
		Let $0<\delta<1$, $N_1=\max(N_0,10^{7}q)$ and consider $\sigma$ such that $1<\sigma<1+\delta$ and 
		\begin{equation}\label{N_1}
		\sum_{n=0}^{N_1}\frac{|f(n)|}{(n+\alpha)^{\sigma}}<\frac{1}{100}\sum_{n=N_1+1}^{\infty}\frac{|f(n)|}{(n+\alpha)^{\sigma}}.
		\end{equation}
		Observe that such a $\sigma$ exists by \eqref{limit}. Now, for $\g{p}\mid \g{a}$ or $\g{p}\mid (n+\alpha)\g{a}$ with $n\leq N_1$ we choose $\varphi(\g{p})=1$.\\
		Proceeding by induction, for $j\geq 1$, we put $M_j=\lfloor 10^{-6}N_j\rfloor$ and $N_{j+1}=N_j+M_j$. Suppose we have defined $\varphi(\g{p})$ for any $\g{p}\mid (n+\alpha)\g{a}$ with $n\leq N_j$ in such a way that 
		\begin{equation}\label{induction}
		\bigg| \sum_{n=0}^{N_j}\frac{f(n)\varphi(n)}{(n+\alpha)^{\sigma}}\bigg|<\frac{1}{100}\sum_{n=N_j+1}^{\infty}\frac{|f(n)|}{(n+\alpha)^{\sigma}}.
		\end{equation}
		We want to define $\varphi(\g{p})$ for any prime ideal 
		\begin{equation}\label{primes}
		\g{p}\mid \prod_{n\leq N_{j+1}}(n+\alpha)\g{a}
		\end{equation}
		in such a way that \eqref{induction} holds for $j+1$ in place of $j$. 
		For any $b\in\Set{0,\dots,q-1}$, we divide the integers $N_j<n\leq N_{j+1}$, with $n\equiv b\pmod q$ into two sets $\g{A}(b)$ and $\g{B}(b)$ according to whether a prime ideal $\g{p}_n$ as in Lemma \ref{lemma} exists or not for $N=N_j$ and $M=M_j$. We can easily notice that $|\g{A}(b)|\geq 5$, since 
		\[
		|\g{A}(b)|\geq \frac{54}{100}\frac{M_j}{q}=\frac{54}{100}\frac{\lr{10^{-6}N_j}}{q},
		\]  
		and $N_j\geq 10^7q$. We have then divided the integers $N_j<n\leq N_{j+1}$ into the disjoint sets $
		\g{A}=\cup_{b=0}^{q-1}\g{A}(b)$ and $\g{B}=\cup_{b=0}^{q-1}\g{B}(b)$. As in Cassels', given a prime ideal as in \eqref{primes}, we distinguish three cases:
		\begin{enumerate}
			\item[(1)] $\g{p}\mid \prod_{n\leq N_{j}}(n+\alpha)\g{a}$: in this case $\varphi(\g{p})$ is fixed by the inductive hypothesis.
			\item[(2)] $\g{p}=\g{p}_n$ for some $n\in \g{A}$
			\item[(3)] the remaining $\g{p}$ with property \eqref{primes}. In this case, we fix arbitrarily $\varphi(\g{p})=1$. 			
		\end{enumerate}
		In particular, $\varphi(n)$ is defined for any $n\in\g{B}$, whereas if $n\in\g{A}$, we have that $\varphi(n)=c_n\varphi(\g{p}_n)$, with $c_n$ fixed of modulus 1. Now assume $n\in\g{A}$ and $n\equiv b\pmod q$ with $b\in\Set{0,\dots,q-1}$. Since $f(n)$ is periodic of period $q$ and $|\g{A}(b)|\geq5$, by Bohr's results on addition of convex curves (cf. \cite{b}), for an appropriate choice of $\varphi(\g{p}_n)$ for all $n\in \g{A}(b)$, we have that 
		\[
		\sum_{n\in\g{A}(b)}\frac{f(n)\varphi(n)}{(n+\alpha)^{\sigma}}=\sum_{n\in\g{A}(b)}\frac{f(n)c_n\varphi(\g{p}_n)}{(n+\alpha)^{\sigma}}=f(b)\sum_{n\in\g{A}(b)}\frac{c_n\varphi(\g{p}_n)}{(n+\alpha)^{\sigma}}
		\]
		takes any given value $z$ satisfying
		\[
		|z|\leq S_{3,b}:= |f(b)|\sum_{n\in\g{A}(b)}\frac{1}{(n+\alpha)^{\sigma}}.
		\]
		Let now
		\[
		\Lambda(b):=f(b)\bigg(\sum_{\substack{n\leq N_j\\n\equiv b\pmod q}}\frac{\varphi(n)}{(n+\alpha)^{\sigma}}+\sum_{n\in\g{B}(b)}\frac{\varphi(n)}{(n+\alpha)^{\sigma}}\bigg),
		\]
		and define $\varphi(\g{p}_n)$ for $n\in\g{A}(b)$ so that 
		\[
		\sum_{n\in\g{A}(b)}\frac{f(n)\varphi(n)}{(n+\alpha)^{\sigma}}=\begin{cases}
		-\Lambda(b)\quad&\text{if}\quad |\Lambda(b)|\leq S_{3,b}\\	
		-S_{3,b}\frac{\Lambda(b)}{|\Lambda(b)|}	&\text{if}\quad |\Lambda(b)|>S_{3,b}.	\end{cases}
		\]
		With this choice, it is easy to verify that 
		\begin{equation}\label{N_j+1}
		\bigg|\sum_{\substack{n\leq N_{j+1}\\n\equiv b\pmod q}}\frac{f(n)\varphi(n)}{(n+\alpha)^{\sigma}}\bigg|\leq \max(0,|\Lambda(b)|-S_{3,b}).
		\end{equation}  
		We introduce the notation 
		\[
		S_{1,b}=\bigg|\sum_{\substack{n=0\\n\equiv b\pmod q}}^{N_j}\frac{f(n)\varphi(n)}{(n+\alpha)^{\sigma}}\bigg|,\quad S_{4,b}=|f(b)|\sum_{\substack{n>N_{j+1}\\n\equiv b\pmod q}}\frac{1}{(n+\alpha)^{\sigma}},\quad S_{2,b}=|f(b)|\sum_{n\in\g{B}(b)}\frac{1}{(n+\alpha)^{\sigma}}. \]
		Now, recalling that $\g{B}(b)$ contains at most $0.46\frac{M_j}{q}$ elements and $\g{A}(b)$ at least $0.54 \frac{M_j}{q}$, we have
		\[
		\frac{S_{3,b}}{S_{2,b}}\geq \frac{54}{46}\frac{(N_j+\alpha)^{\sigma}}{(N_{j+1}+\alpha)^{\sigma}}>\frac{101}{99},
		\]
		Thus, we deduce
		\begin{equation}\label{3-2}
		S_{3,b}-S_{2,b}	>\frac{1}{100}(S_{3,b}+S_{2,b}).	
		\end{equation}
		Now, by the equations \eqref{induction}, \eqref{N_j+1} and \eqref{3-2} we get 
		\[
		\begin{split}
		\bigg|\sum_{\substack{n=0\\n\equiv b\pmod q}}^{N_{j+1}}\frac{f(n)\varphi(n)}{(n+\alpha)^{\sigma}}\bigg|&
		<\frac{1}{100}S_{4,b}=\frac{1}{100}\sum_{\substack{n>N_{j+1}\\n\equiv b\pmod q}}\frac{|f(n)|}{(n+\alpha)^{\sigma}}.
		\end{split}
		\]
		Summing over the classes modulo $q$, we finally get that 
		\[
		\bigg|\sum_{n=0}^{N_{j+1}}\frac{f(n)\varphi(n)}{(n+\alpha)^{\sigma}}\bigg|<\frac{1}{100}\sum_{b=0}^{q-1}S_{4,b}
		<\frac{1}{100}\sum_{n>N_{j+1}}\frac{|f(n)|}{(n+\alpha)^{\sigma}}.
		\]
		So, equation \eqref{induction} also holds for $j+1$ in place of $j$, as desired. By induction, it then holds for all $j\ge 1$. Since $F(s,f,\alpha)$ is absolutely convergent for $\sigma>1$, the right-hand side goes to zeros as $j\to +\infty$. It then follows that $\sum_{n=0}^{\infty}\frac{f(n)\varphi(n)}{(n+\alpha)^\sigma}=0$ and the proof is complete, since by almost periodicity and Rouché's theorem we can conclude the existence of infinitely many zeros for $F(s,f,\alpha)$ with $\sigma>1$.   
		
		\subsection{Proof of Lemma \ref{lemma}}
		Let $\g{P}$ be the set of the prime ideals $\g{p}$ of $O_K$ defined as in Cassels', with the added condition that $(p,q)=1$, where $p:=\norm(\g{p})$. Then, for any integer $n$ we write 
		\begin{equation}\label{fact}
		(n+\alpha)\g{a}=\g{b}\prod_{\g{p}}\g{p}^{u(\g{p})},
		\end{equation}
		where $u(\g{p})$ is an integer and $\g{b}$ contains all the prime factors of $(n+\alpha)\g{a}$ which are not in $\g{P}$. \\
		Consider now an integer $N>10^6q$ and let 
		$M=\lfloor 10^{-6}N\rfloor$. We define $\g{S}=\g{S}(N,q,b)$
		as the set of the integers $N<n\leq N+M$, $n\equiv b\pmod q$ such that, for all the primes $\g{p}\in\g{P}$ in the factorization \eqref{fact} one has $p^{u(\g{p})}<M$. Let $S=S(N,q,b)=|\g{S}|$. We want an upper bound for $S$.\\
		For any prime $\g{p}\in\g{P}$ and any integer $v$, let $\phi(\g{p}^v,n)$ and $\sigma(n)$ be defined as in \cite{cas}. Thus, the same argument gives, as $N\to\infty$,
      	\begin{equation}\label{sum}
		\sum_{n\in\g{S}}\sigma(n)\geq(2+o(1))S\log M.
		\end{equation}		
		Moreover, by the definition of $\g{P}$, if $\g{p}^v\mid (n_1+\alpha)\g{a}$ and $\g{p}^v\mid (n_2+\alpha)\g{a}$ for some integer $v$ then
		\begin{equation}\label{equiv}
		n_1\equiv n_2\pmod {p^v}.
		\end{equation}
		Since we assumed $(p,q)=1$, by the Chinese remainder theorem $n_1\equiv n_2\pmod {p^vq}$.		As in \cite{cas}, we get
		\begin{equation}\label{s_2}
		\sum_{n\in\g{S}}\phi(\g{p}^v,n)\leq \sum_{\substack{N<n\leq N+M\\n\equiv b\pmod q}}\phi(\g{p}^v,n)\leq \bigg(\frac{M}{p^vq}+1\bigg)\log p,
		\end{equation}
		and, assuming $\g{p}_1\ne \g{p}_2$,
		\begin{equation}\label{s_3}
		\sum_{n\in\g{S}}\phi(\g{p}_1^v,n)\phi(\g{p}_2^v,n)\leq \sum_{\substack{N<n\leq N+M\\n\equiv b\pmod q}}\phi(\g{p}_1^v,n)\phi(\g{p}_2^v,n)\leq \log p_1\log p_2\bigg(\frac{M}{p_1p_2q}+1\bigg).
		\end{equation}
		Writing $\sigma(n)=\sigma_1(n)+\sigma_2(n)+\sigma_3(n)$, with the same notation of \cite{cas}, using the prime ideal theorem, partial summation and equations \eqref{s_2}, \eqref{s_3}, we get
		\[\sum_{n\in\g{S}}\sigma_2(n)
		\leq \bigg(\frac{1}{2}+o(1)\bigg)\frac{M}{q}\log M,\]
		\[\sum_{n\in\g{S}}(\sigma_3(n))^2\leq\bigg(\frac{3}{8}+o(1)\bigg)\frac{M}{q}\log^2 M,\]
		and 
		\[\sum_{n\in\g{S}}\sigma_1(n)=O(M)=o(M\log M).\]
		We define $\rho:=\frac{qS}{M}$ and the proof now proceeds exactly as in Cassels'. The better numerical result simply follows by a more precise choice of $\rho$ in expression (37) of \cite{cas}. 
		\section*{Acknowledgments} 
		I would like to thank my PhD supervisors, Alberto Perelli and Sandro Bettin, for their valuable suggestions.

	\end{document}